\documentclass[12pt]{article}

\usepackage{amsmath}
\usepackage{amsthm}
\usepackage{graphicx}
\usepackage{amsfonts}
\usepackage{fullpage}
\usepackage{ amssymb }
\usepackage{cite}
\usepackage{hyperref}

\title{Look, Knave}
	\author{Thomas Morrill\footnote{Supported by Australian Research Council Discovery Project DP160100932 }\\ School of Science\\ The University of New South Wales Canberra, Australia\\
	\href{mailto:t.morrill@adfa.edu.au}{\nolinkurl{t.morrill@adfa.edu.au}}}

\newtheorem{cor}{Corollary}
\newtheorem{thm}{Theorem}
\newtheorem{lemma}{Lemma}

\newcommand{\one}{\texttt{1}}
\newcommand{\zero}{\texttt{0}}
\newcommand{\ones}{\texttt{1}{s}}
\newcommand{\zeros}{\texttt{0}s}

\begin{document}

\maketitle

\section{Introduction}

\begin{abstract}
  We examine a recursive sequence in which $s_n$ is a literal description of what the binary expansion of the previous term $s_{n-1}$ is not.
  By adapting a technique of Conway, we determine limiting behaviour of $\{s_n\}$ and dynamics of a related self-map of $2^{\mathbb{N}}$.
  Our main result is the existence and uniqueness of a pair of binary sequences, each the compliment-description of the other.
  We also take every opportunity to make puns.
\end{abstract}

The Look-Say sequence is defined as follows.
Let $s_1 = 1$.
Given $s_n$, the next term of the sequence is a literal description of the digits of the previous \cite{Look}.
The first few terms are
\[
1, 11, 21, 1211, 111221, \ldots
\]
We'll use $|s|$ to denote the length of a finite string $s$.
\begin{thm}[Conway, \cite{Conway, Lambda}] \label{Conway}
  Let $s_n$ be the $n$th term of the Look-Say sequence.
  Then
  \[
    \lim_{n \to \infty} \frac{|s_{n+1}|}{|s_n|} = \lambda,
  \]
  where
  \[
    \lambda = 1.3035\ldots.
  \]
\end{thm}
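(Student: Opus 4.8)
The plan is to adapt Conway's \emph{audioactive decay} framework. First I would introduce a \emph{splitting} relation on finite binary strings: say that $AB$ splits as $A \mid B$ if, for every $k \ge 0$, the $k$-fold Look-and-Say iterate of $AB$ equals the concatenation of the $k$-fold iterate of $A$ with that of $B$. The key structural fact is that splitting is preserved under iteration, so once a string breaks into pieces it evolves forever as a disjoint union of those pieces. One then checks directly which local configurations at a boundary force a split — essentially, wherever the run structure can never merge across the boundary under any number of further descriptions — and calls a string \emph{atomic} if it never splits again. This reduces the study of $|s_n|$ to bookkeeping of atoms.

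The combinatorial heart of the argument — and the step I expect to be the main obstacle — is what Conway calls the \emph{Cosmological Theorem}: there is a universal constant $N$ such that after $N$ iterations every string, in particular every $s_n$ with $n \ge N$, has decomposed entirely into atoms drawn from one fixed finite alphabet of ``common'' atoms (Conway's $92$ elements, together with their transuranic relatives, which in fact never arise from $s_1$). Conway's original proof is a long finite case analysis; I would either reproduce a streamlined version or invoke the verified accounts in \cite{Conway, Lambda}. Granting this, list the relevant common atoms $a_1, \dots, a_m$; each $a_i$ decays in one step into a fixed multiset of the $a_j$'s, and we record these multiplicities in a non-negative integer matrix $M \in \mathbb{Z}_{\ge 0}^{m\times m}$.

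Next I would track the vector $v_n \in \mathbb{Z}_{\ge 0}^m$ whose $j$th coordinate counts the copies of $a_j$ appearing in $s_n$. For $n \ge N$ we have $v_{n+1} = M v_n$, while $|s_n| = \ell v_n$ where $\ell$ is the row vector of atom lengths. One shows $M$ is primitive on the communicating class reached from $s_1$ — every common atom eventually turns up in the decay of every other, so some power of $M$ is strictly positive there. Perron--Frobenius then supplies a simple dominant eigenvalue $\lambda > 0$ with strictly positive left and right eigenvectors, so $v_n / \lambda^n$ converges to a positive multiple of the Perron eigenvector, and hence
\[
  \frac{|s_{n+1}|}{|s_n|} = \frac{\ell M v_n}{\ell v_n} \longrightarrow \lambda.
\]
Finally, to identify the numerical value one computes the characteristic polynomial of $M$, discards the factors whose roots are dominated by $\lambda$ (leaving a specific irreducible factor of degree $71$), and checks that its unique real root exceeding $1$ is $\lambda = 1.3035\ldots$; this last step is a finite computation I would not perform by hand.
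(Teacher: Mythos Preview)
Your outline is the standard Conway argument and is correct in substance; the paper itself does not give a proof of this theorem at all but simply cites Conway and remarks that it follows from the Cosmological Theorem, which is precisely the mechanism you are sketching. So your approach agrees with what the paper invokes, only spelled out in more detail than the paper provides.
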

Shockingly, $\lambda$ is an algebraic integer of degree $71$.
Theorem \ref{Conway} follows from Conway's Cosmological Theorem \cite{Conway}.
In short, the terms of any Look-Say type sequence (not necessarily starting at $s_1 = 1$) will eventually decompose into a concatenation of certain fundamental substrings identified by Conway, his ``elements''.

This problem has also been considered in terms of binary strings.
Given a binary string $s_n$, the next term of the Binary Look-Say sequence is a literal description of the bits of the previous term, where the counts are expressed in base $2$ \cite{Look2}.
The first few terms are
\[
1, 101, 101100101, \ldots
\]

\begin{thm}[Johnston, \cite{Johnston}] \label{Johnston}
  Let $s_n$ be the $n$th term of the Binary Look-Say sequence.
  Then
  \[
    \lim_{n \to \infty} \frac{|s_{n+1}|}{|s_n|} = \lambda,
  \]
  where
  \[
    \lambda = 1.465571\ldots.
  \]
\end{thm}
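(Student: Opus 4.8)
The plan is to adapt Conway's ``audioactive decay'' argument --- the Cosmological Theorem underlying Theorem \ref{Conway} --- to the binary setting. First I would pin down the binary look-and-say map $\sigma$ precisely: decompose a string into its maximal runs of equal bits, and replace a run of $k$ copies of a bit $b$ by the binary numeral for $k$ followed by $b$. The first nontrivial point is to catalogue the pathologies peculiar to base $2$ --- for instance a run of length $2$ is encoded as \one\zero, which injects a stray \zero\ into the output --- and to verify that reading a $\sigma$-image back does not manufacture spurious run boundaries. This bookkeeping is exactly what makes the binary version genuinely different from Conway's decimal one.

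The heart of the argument is a splitting lemma. Say a string $L$ \emph{splits} as $AB$ if $\sigma^n(AB) = \sigma^n(A)\,\sigma^n(B)$ for every $n \ge 0$. I would show that, after a uniformly bounded number of applications of $\sigma$, any starting string decomposes as a concatenation of finitely many indecomposable ``elements'' (the binary analogue of Conway's $92$ elements, presumably far fewer here), and that this finite list is closed under $\sigma$, in the sense that $\sigma$ applied to an element is again a concatenation of elements. Establishing closure together with completeness of the element list is the main obstacle: it is a finite but delicate case analysis over the run-length patterns that can occur at the seam between two adjacent elements, where one must rule out the neighbouring runs ever coalescing under iteration. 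As in Conway's treatment, this is the step most naturally discharged by a systematic (or machine-assisted) enumeration.

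Granting the element decomposition, the proof concludes by linear algebra. Let $M$ be the nonnegative integer matrix whose $(i,j)$ entry records how many copies of element $j$ appear in $\sigma$ applied to element $i$. Restricting to the elements that actually recur, the associated digraph is strongly connected and aperiodic, so $M$ is primitive and Perron--Frobenius gives a simple dominant eigenvalue $\lambda > 0$ with a strictly positive eigenvector. Starting from $s_1 = \one$, once $s_n$ has settled into elements its vector of element counts grows like $\lambda^n$, and since each element has fixed length the total length does too; hence $|s_{n+1}|/|s_n| \to \lambda$. The last step is to identify $\lambda$: compute the characteristic polynomial of $M$, strip off the cyclotomic and low-degree factors, and check that the surviving factor has largest real root $1.465571\ldots$ --- which I expect to be the real root of $x^3 = x^2 + 1$ --- completing the proof.
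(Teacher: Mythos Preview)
The paper does not prove this statement. Theorem~\ref{Johnston} is Johnston's result, quoted in the introduction alongside Conway's Theorem~\ref{Conway} purely as background and motivation; the paper's own work begins in Section~\ref{knave} and concerns the Look--Knave sequence, not the Binary Look--Say sequence. So there is no in-paper proof to compare your proposal against.

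For what it is worth, your sketch is a faithful outline of how such results are actually established: Johnston's argument, like Conway's, proceeds by isolating a finite set of atomic substrings closed under the map, building the transition matrix, and extracting the Perron eigenvalue. Your guess that $\lambda$ is the real root of $x^3 = x^2 + 1$ is correct. The step you flag as the main obstacle --- proving the element list is complete and closed under $\sigma$, via a case analysis at the seams --- is indeed where the real work lies, and ``proof sketch'' is the honest label for what you have written, since you have not carried that enumeration out. But none of this can be weighed against the paper, because the paper simply cites the theorem and moves on.
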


We'll shake this up by introducing a new player, a Knave in the style of Smullyan.
As opposed to the previous recursions, our $s_n$ is instead the literal description of what the bits of $s_{n-1}$ aren't.
Our main result concerns the limiting behaviour of the \emph{Look-Knave Sequence}.

\begin{thm}\label{me}
  There is a unique pair of binary sequences $S_{even}$ and $S_{odd}$ such that $S_{even}$ is a literal description of the bitwise compliment of $S_{odd}$, and vice versa.
\end{thm}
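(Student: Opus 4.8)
The plan is to recast Theorem~\ref{me} as a fixed-point problem for a single operator and then to imitate Conway's analysis of the Look-Say dynamics. Write $T$ for the \emph{complement-and-describe} operator: for a binary string $s$ that is not eventually constant, $T(s)$ is obtained by flipping every bit of $s$ and then run-length encoding the result, each run length being written in base $2$ according to the fixed, uniquely parseable convention already used for the Binary Look-Say sequence. The Look-Knave sequence is then $s_1, T(s_1), T^2(s_1), \dots$, and a pair $(S_{even}, S_{odd})$ as in the statement is exactly a $2$-cycle of $T$, i.e.\ $T(S_{odd}) = S_{even}$ and $T(S_{even}) = S_{odd}$. First I would record the elementary facts: the non-eventually-constant strings form a $T$-invariant class, $T$ is length-increasing on finite strings, and --- the point that makes limiting arguments legitimate --- $T$ is continuous for the prefix topology, since any fixed initial segment of $T(s)$ is already determined by a finite initial segment of $s$ (one only needs to read $s$ far enough to close off the runs of its complement whose descriptions reach that far). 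Thus $T$ is a continuous partial self-map of the compact space $2^{\mathbb{N}}$.

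For existence I would adapt Conway's Cosmological Theorem. The goal here is a finite list of \emph{Look-Knave elements} --- fundamental substrings such that, after finitely many applications of $T$, every term of the Look-Knave sequence is a concatenation of elements, and each element decays under $T$ into a bounded concatenation of elements independently of its neighbours, so that once a splitting point appears it persists forever. Granting this, the initial segment of $s_{2k+1} = T^{2k}(s_1)$ stabilises as $k \to \infty$ (its leftmost elements are eventually locked and merely reproduce themselves up to period $2$), so $S_{odd} := \lim_k T^{2k}(s_1)$ exists in $2^{\mathbb{N}}$, and likewise $S_{even} := \lim_k T^{2k+1}(s_1)$. Continuity of $T$ then immediately gives $T(S_{odd}) = S_{even}$ and $T(S_{even}) = S_{odd}$.

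For uniqueness, which I expect to be the genuinely hard part, let $(A,B)$ be any pair satisfying $T(A) = B$ and $T(B) = A$. Then the Look-Knave orbit of $A$ is already the $2$-periodic string $A, B, A, B, \dots$, so by the element decomposition $A$ and $B$ are each concatenations of elements whose multiset is periodic of period dividing $2$ under element-wise derivation. I would then argue from the left: using the explicitly tabulated element list together with the constraint $T(B) = A$, the leftmost element of $A$ is forced to be one specific element, hence so is the leftmost element of $B$, hence the next element of $A$, and so on by induction, pinning $\{A, B\}$ down to the pair constructed above. The main obstacle is exactly this programme: establishing the finite element list for the complement-and-describe dynamics (the analogue of Conway's $92$ elements) and verifying its closure under $T$, and then checking that the period-$2$ self-description constraint really does leave no choice at any stage --- a finite but delicate case analysis that must also keep the even/odd bookkeeping straight and, in particular, decide whether $S_{odd}$ and $S_{even}$ are genuinely distinct or collapse to a common fixed point of $T$.
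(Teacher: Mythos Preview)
Your setup --- treat the complement-and-describe operator $T$ (the paper's $k$) as a continuous self-map of $2^{\mathbb{N}}$ and look for a $2$-cycle --- matches the paper exactly. Where you diverge is in the machinery.

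For \emph{existence} you reach for a full Conway-style element decomposition with persistent splitting points. The paper avoids this entirely. Its Lemma~1 shows that no maximal run in any $s_n$ has length exceeding five (at most three for runs of \texttt{0}s), which yields a short finite table of all possible adjacent run-pairs and their images under $k$; one reads off from it that $k$ never shortens such a pair. No splitting is invoked: since every iterate begins with \texttt{10}, the image of any initial block of runs is strictly longer than the block, so $s_{\ell+1}$ and $s_{\ell+3}$ agree on at least the first $|r_\ell|$ bits (Lemma~2), and the even and odd subsequences of $\{s_n\}$ converge in $2^{\mathbb{N}}$.

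For \emph{uniqueness}, which you flag as the hard part and propose to attack by left-to-right forcing through an element list, the paper does something both stronger and easier: a global attraction result (Theorem~4). For \emph{any} $S\in\mathcal{S}$, if $k(S)$ begins with a run of $\ell\geq 2$ ones then $k^2(S)$ begins with a strictly shorter such run, so some iterate begins with \texttt{10}; thereafter the prefix-growth argument above applies verbatim and forces $d(k^n(S),k^n(\texttt{1}))\to 0$ or $d(k^n(S),k^n(\texttt{10}))\to 0$. Uniqueness is then a one-line corollary: any fixed point $A$ of $k^2$ satisfies $A=\lim_n k^{2n}(A)\in\{S_{even},S_{odd}\}$. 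So your programme is not wrong in principle, but it imports far heavier tools than the problem needs and misplaces the effort --- the run-length bound plus the ``leading run shrinks'' observation deliver both existence and uniqueness without ever building an element list or verifying any splitting.
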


The rest of the paper is organised as follows.
In Section \ref{knave} we define the Look-Knave Sequence and pose our problem.
Then, in Section \ref{kafka}, we simplify the problem and prove Theorem \ref{me}.
Finally, in Section \ref{future}, we offer avenues for future work.

\section{The Knave} \label{knave}

Recall Smullyan's game of Knights and Knaves, a logic puzzle in which Knights always tell the truth, and Knaves are alway compelled to lie \cite{Knave}.
Our Knave is a very idiosyncratic liar.
When the Knave looks at a string of $n$ \zeros, they correctly tell us they see $n$ bits of the same parity, but they will lie by saying that there are $n$ \ones.
Likewise, while looking at $k$ \ones, the Knave will happily tell us there are $k$ \zeros{} instead.

The Knave understands how to express natural numbers in base $2$, and will write down their observations for us as such.
Thus, when the Knave looks at the string
\[
\texttt{110},
\]
they write down
\[
\texttt{10  0  1  1}
\]
for the two \zeros{}  and one \one{} they claim to have seen.
Here, we have inserted whitespace to enhance the Knave's handwriting.

Now, our Knave has not yet realized that they could have lied about their count by inverting the bits representing $n$ and $k$ above.
I won't tell them if you won't.

Thus begins our new game.
We will supply a binary string, and command ``Look, Knave''.
Dutifully, the Knave will read the string, then record their observations on a fresh piece of paper for us.
We return this paper to the Knave, who reads their own report and transcribes it in the only way they can.
The game continues.

Let's begin with the string $s_1 = \one$, and take $s_n$ to be the Knave's description of $s_{n-1}$.
This defines the Look-Knave sequence.
For example, $s_3 = \texttt{1011}$.
We see that there is one bit which is not \zero,
followed by one bit which is not \one,
then two bits which are not \zero.
Thus, $s_4$ must be the string \texttt{1011100}.
In short, $s_n$ is a a binary string describing precisely what $s_{n-1}$ is not.

\begin{table}
\begin{center}
\begin{tabular}{l l}
$s_{2n+1}$ & $s_{2n+2}$\\
\hline
1 & 10 \\
1011 & 1011100 \\
1011110101 & 1011100011101110 \\
10111101111101111011 & 1011100011101011100011100 \\
1011110111110111011110111110101 & 101110001110101111011100011101011101110
\end{tabular}
\caption{\label{table} The first ten entries of the Look-Knave Sequence.}
\end{center}
\end{table}

Looking at Table \ref{table}, it is tempting to conjecture that the subsequences $\{s_{2n+1}\}$ and $\{s_{2n+2}\}$ are approaching some bitwise limits.
So, do there exist binary sequences $S_{even}$ and $S_{odd}$ such that $S_{odd}$ is the Knave's description of $S_{even}$, and vice versa?

A binary sequence $S$ can be described by the Knave, so long as the tail end of $S$ is not all \zeros{} or all \ones.
Let $\mathcal{S} \subset 2^{\mathbb{N}}$ be the set of all such sequences.
Then the Knave imposes a map $k: \mathcal{S} \to \mathcal{S}$.

It will be convenient to view finite strings as belonging to $2^{\mathbb{N}}$.
We'll say that a string whose final bit is \zero{} is followed by a tail of all \ones, and vice versa.
For example,
\begin{align*}
  \texttt{101} &\leftrightarrow \texttt{101000\ldots} \\
  \texttt{100} &\leftrightarrow \texttt{100111\ldots}.
\end{align*}
Our Knave doesn't have the patience for these infinite matters,
so when we do compel them to act on $2^{\mathbb{N}}$, the Knave will report
\[
 \texttt{000\ldots}
\]
as
\[
  \texttt{111\ldots}
\]
and vice-versa.
Thus, these tails will never interfere with the preceding string.
We will (somewhat abusively) treat these either as sequences or strings, depending on which is more convenient.

Note that $k$ is not invertible; already
\[
 k(\texttt{10}) = k(\texttt{11111}) = 1011.
\]

\section{Metamorphosis} \label{kafka}

For a natural number $n$, let $[n]$ denote the string which represents $n$ in base $2$.
We'll call any string of $n$ \zeros{} or $k$ \ones{} a \emph{ribbit}, short for Repeated BIT.
If we need to clarify what bit is repeated, we can say that \texttt{111} is a \emph{ribbit of three \ones}, or an \emph{odd ribbit}.
Likewise, \texttt{000} is a \emph{ribbit of three \zeros}, and an \emph{even ribbit}.
Thus, any binary sequence $S \in \mathcal{S}$ decomposes into a sequence of ribbits of alternating parity.
Aristophanes would tell us the $\mathcal{S}$ stands for $\mathcal{S}$ongs.

Let $S \in \mathcal{S}$.
Since the Knave must begin their report with a \one, we'll assume that $S$ begins with an odd ribbit.
Then $S$ decomposes into ribbits as
\[
  S = r_1\ r_2\ r_3\ \ldots
\]
Hoppily, this means that odd ribbits are indexed by odd subscripts and vice-versa.

We may write
\[
  k(S) = [|r_1|] \ \zero \ [|r_2|] \ \one \  [| r_3|] \ \zero \ldots.
\]
It is unfortunate here that the \one{} arising from $r_{2\ell + 1}$ will always form a ribbit with $[|r_{2\ell + 2}|]$.
Further, this \one{} can form a ribbit with $[|r_{2\ell + 1}|]$ (or a \zero{} with $[|r_{2\ell}|]$), depending on the final bit of $[|r_{2\ell + 1}|]$ (resp., $[|r_{2\ell}|]$).
However, the decomposition of $s_n$ into even and odd ribbits allows us to get the Knave's reports piecemeal; keeping
\[
  S = r_1\ r_2\ r_3\ \ldots
\]
with $r_1$ odd, then
\[
  k(S) = k(r_1)\ k(r_2 \ r_3)\ k(r_4 \ r_5) \ldots. 
\]
Thus, we can determine the behaviour of $k$ by examining all possible pairs of ribbits occurring in the decomposition of all $s_n$.
Fortunately, there are not many to check.

\begin{lemma}
  Let $\{ s_n\}$ be the Look-Knave sequence.
  A maximal ribbit occurring in $s_n$ cannot have length greater than five.
\end{lemma}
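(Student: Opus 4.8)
The plan is to prove the bound by induction on $n$, with base case $s_1 = \one$, which is a single ribbit of length one. For the inductive step, assume every maximal ribbit of $s_{n-1}$ has length at most five and consider $s_n = k(s_{n-1})$. The first thing I would pin down is the shape of $s_n$. Every term of the Look-Knave sequence begins with \one: this holds for $s_1$, and for any string the Knave's report begins with the leading bit \one{} of $[|r_1|]$. So, decomposing $s_{n-1} = r_1\, r_2\, \cdots\, r_m$ into maximal ribbits, $r_1$ is a ribbit of \ones, and the formula $k(S) = [|r_1|]\,\zero\,[|r_2|]\,\one\,[|r_3|]\,\zero\cdots$ shows that $s_n$ is a concatenation of blocks $B_1\, B_2\, \cdots\, B_m$ with $B_i = [|r_i|]\,b_i$, where $b_i = \zero$ for odd $i$ and $b_i = \one$ for even $i$. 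Since $1 \le |r_i| \le 5$ by hypothesis, $[|r_i|]$ is one of $\texttt{1}, \texttt{10}, \texttt{11}, \texttt{100}, \texttt{101}$, so every odd-indexed block is one of $\texttt{10}, \texttt{100}, \texttt{110}, \texttt{1000}, \texttt{1010}$ and every even-indexed block is one of $\texttt{11}, \texttt{101}, \texttt{111}, \texttt{1001}, \texttt{1011}$. In particular each block begins with \one, each odd-indexed block ends with \zero, and each even-indexed block ends with \one.

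With this in hand I would bound the ribbits of $s_n$ directly. A maximal ribbit of \zeros{} cannot straddle a block boundary, because the block on its right begins with \one; hence it lies inside a single $B_i$, and inspecting the ten possible blocks shows none contains a run of more than three \zeros. A maximal ribbit $R$ of \ones{} takes a little more care. First, $R$ can meet at most two consecutive blocks: if it met $B_i$ and $B_{i+2}$ it would fill $B_{i+1}$ entirely with \ones, forcing $b_{i+1} = \one$ and so $i+1$ even, whereas passing from $B_i$ into $B_{i+1}$ forces $b_i = \one$ and so $i$ even --- a contradiction. If $R$ lies in a single block, inspection bounds $|R|$ by three. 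If $R$ meets two consecutive blocks $B_i, B_{i+1}$, the same parity argument forces $i$ even and $i+1$ odd, and $R$ is a suffix of \ones{} of an even-indexed block followed by a prefix of \ones{} of an odd-indexed block; the former has length at most three (attained only by $\texttt{111}$) and the latter at most two (attained only by $\texttt{110}$), so $|R| \le 5$. This closes the induction.

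The one genuinely delicate point is the parity bookkeeping for runs of \ones --- in particular the claim that such a run cannot chain through three or more consecutive blocks, which is exactly what keeps the estimate from blowing up; everything else is a finite check over the ten admissible blocks. It is also worth noting why five, rather than something smaller, is the self-sustaining bound: a block $\texttt{111}$ (produced by a ribbit of length three) abutting a block $\texttt{110}$ (also from a ribbit of length three) already spells out $\texttt{11111}$, so no smaller bound holds; and to produce a \emph{longer} run one would need the numeral $[7] = \texttt{111}$ to appear as some $[|r_i|]$, which is precisely what the inductive hypothesis $|r_i| \le 5$ forbids.
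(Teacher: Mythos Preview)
Your proof is correct and follows essentially the same approach as the paper's: both argue by induction (equivalently, minimal counterexample) and analyze how runs in $s_n$ are assembled from the count-plus-bit pieces coming from $s_{n-1}$, with the key observation that a long run of \ones{} can bridge at most one block boundary. Your version is a bit more systematic---you enumerate the ten admissible blocks and read off the bounds directly, incidentally obtaining the sharper bound of three for even ribbits that the paper records separately as a corollary---but the underlying idea is the same.
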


\begin{proof}
  Suppose $n$ is the smallest index such that $s_n$ contains a ribbit $r$ of length six or greater, either
  \[
    s_n = \ldots \one \overbrace{\zero \ldots \zero}^{\geq 6} \one \ldots
  \]
  or
  \[
    s_n = \ldots \zero \overbrace{\one \ldots \one}^{\geq 6} \zero \ldots
  \]
  What is $s_n$ describing\footnote{Or rather, what \emph{isn't} $s_n$ describing?}?
  If $r$ is even, then $s_{n-1}$ contains an ribbit of length at least $64$; this ribbit can only occur if $s_{n-1}$ has a ribbit $r'$ such that the binary representation of $|r'|$ has at least five \zeros.
  This is a contradiction.
  
  The case where $r$ is odd is more complicated.
  We already see that such an $r$ could arise from an $r'$ in $s_{n-1}$, where the the binary representation of $|r'|$ has at least five \ones, which is again impossible.
  
  However, $r$ could represent the concatenation of two separate descriptions of ribbits; the first odd, and the second even.
  In this case,
  \[
  s_n = \ldots \ \overbrace{\ldots \zero \one \ldots \one} \ \one \ \overbrace{\one \ldots \one \zero \ldots}  \ \ldots,
  \]
  where the first overbrace indicates the binary expansion of the length of an odd ribbit in $s_{n-1}$, and the second overbrace indicates the binary expansion of the length of an even ribbit in $s_{n-1}$.
  From our assumption on $n$, we see that the only acceptable arrangement is
  \[
  s_n =  \ldots  \ \overbrace{\ldots \one \one \one} \ \one \ \overbrace{\one \one} \ \zero \  \ldots
  \]
  Unfortunately,
  \[
    \overbrace{\ldots \one \one \one} 
  \]
  is the binary expansion of some $n \geq 7$, and we croak.
\end{proof}

In fact, once we know the bound for maximal ribbits in general, we can tighten up the proof for even ribbits.

\begin{cor}
  A maximal even ribbit occurring in $s_n$ cannot have length greater than three.
\end{cor}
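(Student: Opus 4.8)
The plan is to rerun the ribbit-by-ribbit analysis of the preceding lemma, but now with the bound ``every ribbit of $s_{n-1}$ has length at most five'' already in hand. First I would dispose of $n = 1$ (for which $s_1 = \one$ contains no even ribbit), so that I may assume $n \geq 2$ and write $s_n = k(s_{n-1})$. Every $s_m$ begins with a $\one$, so $s_{n-1}$ decomposes as $s_{n-1} = r_1\ r_2\ r_3\cdots$ with $r_1$ odd, and
\[
  s_n = k(s_{n-1}) = [|r_1|]\ \zero\ [|r_2|]\ \one\ [|r_3|]\ \zero\cdots ;
\]
here each separator $\zero$ immediately follows the code of an odd ribbit and each separator $\one$ the code of an even ribbit.

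Next I would feed in the lemma: since $1 \leq |r_i| \leq 5$ for all $i$, each code $[|r_i|]$ is one of $\one,\ \one\zero,\ \one\one,\ \one\zero\zero,\ \one\zero\one$, so every code begins with a $\one$ and has at most two consecutive (in particular at most two trailing) \zeros. Now take a maximal run $r$ of \zeros{} in $s_n$. If $r$ lies inside a single code then $|r|\leq 2$. Otherwise $r$ contains a separator bit; a separator $\one$ contributes no \zeros, so $r$ must contain a separator $\zero$, which occurs only directly after the code of some (odd) ribbit $r_i$. The bit following that separator, if any, is the leading $\one$ of the next code, so $r$ consists of the trailing \zeros{} of $[|r_i|]$ together with the single separator $\zero$, whence $|r| \leq 2 + 1 = 3$. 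Equivalently, in the phrasing of the lemma: a run of four \zeros{} would force either a code with four consecutive \zeros{} (so $|r_i| \geq 2^4 > 5$) or a code with three trailing \zeros{} (so $2^3 \mid |r_i|$, hence $|r_i| \geq 8 > 5$), each impossible.

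There is no real obstacle here once the lemma is granted; the only points needing care are the bookkeeping that every $s_m$ starts with an odd ribbit (so the separator bits fall where claimed and a \zero-run cannot span two separators) and the small finite check that among $[1],\dots,[5]$ both the maximal and trailing \zero-runs equal $2$. I would also note that the bound $3$ is attained --- for instance $s_6 = \texttt{1011100011101110}$ contains the even ribbit $\texttt{000}$ --- so the corollary is sharp.
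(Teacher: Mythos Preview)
Your argument is correct and is precisely the ``tightening'' the paper gestures at: the paper does not spell out a proof of this corollary beyond the remark that, once Lemma~1 bounds every ribbit length by five, the even case of the lemma's argument sharpens immediately. Your explicit enumeration of the possible codes $[1],\dots,[5]$ and the separator bookkeeping fill in exactly those details, so this is the same approach carried out in full.
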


We may now examine the Knave's behaviour on all possible ribbit pairs $(r, r')$ occurring in some $s_n$.
This is shown in Table \ref{rr}.
Note that in all cases, $k(r\ r')$ is no shorter than $r r'$.

From our observation in Table \ref{table}, we want to determine if the sequences $\{s_{2n+1}\}$ and $\{s_{2n + 2}\}$ converge in $\mathcal{S}$.
To this end, we will endow  $2^{\mathbb{N}}$ with a simple metric.
Two distinct binary sequences $S, S'$ who first differ at the $n$th bit satisfy $d(S, S') = 2^{-n}$.
Note that $\mathcal{S}$ is not complete under this metric, but $2^{\mathbb{N}}$ is.

For $\ell \geq 1$, let $r_\ell$ be the string given by the first $\ell$ bits in $s_\ell$, extended to the of the last ribbit.
For example, $r_\ell$ is the string \texttt{1011}, taken from $s_3 = \texttt{1011}$.
  
\begin{lemma}
  For $\ell \geq 1$, the strings $s_{\ell + 1}$ and $s_{\ell + 3}$ agree up to the $|r_\ell|$th bit.
\end{lemma}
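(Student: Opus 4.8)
The plan is to prove the statement by induction on $\ell$, transporting agreement forward through the Knave map $k$ via the piecemeal identity $k(S) = k(r_1)\, k(r_2 r_3)\, k(r_4 r_5)\cdots$. A handful of small values of $\ell$ are verified directly against Table~\ref{table}. For the inductive step at $\ell$, the lemma at index $\ell - 1$ says exactly that $s_\ell$ and $s_{\ell+2}$ already agree up to bit $|r_{\ell-1}|$; since $s_{\ell+1} = k(s_\ell)$ and $s_{\ell+3} = k(s_{\ell+2})$, it suffices to understand how $k$ moves the first point of disagreement and to check that it gets pushed past bit $|r_\ell|$.

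For the bookkeeping I would follow the prefix $r_{\ell-1}$ through $k$. As a prefix of $s_{\ell-1}$ that is a concatenation of whole ribbits and has length at least $\ell - 1$, the block formula carries it to an initial segment of $s_\ell$; using the corollary that even ribbits have length at most three to locate where the last complete block lands, this segment again ends at a ribbit boundary of $s_\ell$. Two further facts---that no block $k(r\, r')$ is shorter than $r\, r'$ (the remark after Table~\ref{rr}), and that the leading block is $k(r_1) = [\,|r_1|\,]\,\zero = \texttt{10}$ because the first ribbit of every $s_n$ is $\one$---combine with the expansion of $k$ to push the length of this image up to at least $|r_\ell|$, so that it contains $r_\ell$; and the agreement of $s_{\ell+1}$ and $s_{\ell+3}$ certainly covers the $k$-image of any initial run of ribbits on which $s_\ell$ and $s_{\ell+2}$ agree. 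The quantitative input is that $k$ is genuinely expanding on prefixes---each ribbit of length one or two in the agreed run contributes an extra bit to its description, while a ribbit of length five costs only one---so once $\ell$ is large the gain over a single application comfortably exceeds the at most four bits separating $|r_{\ell-1}|$ from $|r_\ell|$; the remaining small $\ell$ are absorbed into the base cases.

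I expect the main fight to be with how $k$ relocates the point of disagreement, because of the boundary mismatch already flagged in the text: the $\one$ the Knave emits for an odd ribbit always fuses with the leading $\one$ of the following $[\,|\cdot|\,]$, so when $s_\ell$ and $s_{\ell+2}$ first split in the middle of some ribbit, the induced split in $k(s_\ell)$ and $k(s_{\ell+2})$ can surface one block later, at a position determined by the final bit of an intervening $[\,|r_i|\,]$. I would dispatch this by a finite case analysis indexed by Table~\ref{rr}---splitting on the parity of the diverging ribbit's position and on the unordered pair $\{j, j'\}$ of lengths it takes in the two sequences, where the length bounds (odd ribbits at most five, even ribbits at most three) cut this down to a short list, and in each case $[j]$ and $[j']$ are compared bit by bit. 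The subtlest point is controlling the expansion from below, which is really a structural claim about how short and long ribbits are interleaved in the $s_n$, again read off from that table. Once the lemma is established, taking $\ell = 2n$ and $\ell = 2n+1$ shows that $\{s_{2n+1}\}$ and $\{s_{2n+2}\}$ are Cauchy and hence convergent in the complete space $2^{\mathbb{N}}$---the point of departure for Theorem~\ref{me}.
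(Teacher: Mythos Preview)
Your plan is the paper's: induct on $\ell$ and push agreement forward under $k$ via the strict expansion $|k(r_\ell)|>|r_\ell|$ coming from the leading \texttt{10}. The paper's two-sentence proof simply asserts that the first $|r_\ell|$ bits of $s_\ell,s_{\ell+2}$ determine the first $|r_\ell|+1$ bits of $s_{\ell+1},s_{\ell+3}$; you expand on this by flagging the ribbit-alignment subtlety (the last ribbit of $r_\ell$ may extend further in $s_{\ell+2}$) and proposing a finite case analysis from Table~\ref{rr} to handle it, which is the natural way to close what the paper leaves implicit.

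One genuine wobble in your version of the induction: you carry $|r_{\ell-1}|$---a length read off $s_{\ell-1}$---as the inductive datum, and then need a single application of $k$ to stretch the agreed prefix of $s_\ell,s_{\ell+2}$ from $|r_{\ell-1}|$ bits out to $|r_\ell|$ bits of $s_{\ell+1},s_{\ell+3}$. The gap $|r_\ell|-|r_{\ell-1}|$ can be as large as five, and Table~\ref{rr} has several entries with $|k(r\,r')|=|r\,r'|$, so the leading $+1$ alone does not cover it; your density heuristic about short ribbits (``once $\ell$ is large the gain comfortably exceeds\ldots'') would itself need proof. The paper sidesteps this by effectively inducting on ``$r_\ell$ is a common prefix of $s_\ell$ and $s_{\ell+2}$'', for which the single $+1$ from $k(\one)=\texttt{10}$ suffices. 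Reformulating your step that way---and reserving the case analysis solely for the alignment of the last ribbit---would make your argument go through cleanly.
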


\begin{proof}
  Because $r_\ell$ begins with \texttt{10}, we see that $|k(r_\ell)| > |r_\ell|$.
  In the induction, we see that the first $|r_{\ell}|$ bits of $s_{\ell}$ and $s_{\ell + 2}$ determine at least the first $|r_{\ell}| +1$ bits of $s_{\ell + 1}$ and $s_{\ell + 3}$.
  \end{proof}

\begin{cor}
  The sequences $\{k^{2n}(\one)\}$ and $\{k^{2n}(\texttt{10})\}$ converge in $\mathcal{S}$.
\end{cor}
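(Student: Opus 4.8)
The plan is to identify the two sequences with the odd- and even-indexed subsequences of the Look-Knave sequence and to show that each is Cauchy in the complete metric space $(2^{\mathbb N}, d)$; a short additional argument, using the length bound on ribbits, will keep the two limits inside $\mathcal S$.

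First I would note that since $s_1 = \one$ and $s_2 = k(\one) = \texttt{10}$, we have $k^{2n}(\one) = s_{2n+1}$ and $k^{2n}(\texttt{10}) = k^{2n+1}(\one) = s_{2n+2}$, so the corollary is exactly the assertion that $\{s_{2n+1}\}$ and $\{s_{2n+2}\}$ converge. Directly from its definition $r_\ell$ contains the first $\ell$ bits of $s_\ell$, so $|r_\ell| \geq \ell \to \infty$. By the previous lemma, $s_{\ell+1}$ and $s_{\ell+3}$ agree on their first $|r_\ell| \geq \ell$ bits. Fixing $n$ and applying this with $\ell = 2n, 2n+2, \dots, 2m-2$, each consecutive pair among $s_{2n+1}, s_{2n+3}, \dots, s_{2m+1}$ agrees on at least its first $2n$ bits; since agreeing with a common sequence on its first $k$ bits is a transitive relation, $s_{2n+1}$ and $s_{2m+1}$ agree on their first $2n$ bits for every $m > n$. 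Hence $d(s_{2n+1}, s_{2m+1}) \leq 2^{-2n}$, so $\{s_{2n+1}\}$ is Cauchy, and the same argument with odd values of $\ell$ shows $\{s_{2n+2}\}$ is Cauchy. As $2^{\mathbb N}$ is complete, both subsequences converge there, say to $S_{odd}$ and $S_{even}$, and letting $m \to \infty$ in the estimate shows that $S_{odd}$ agrees with $s_{2n+1}$ on its first $2n$ bits (and similarly $S_{even}$ with $s_{2n+2}$).

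It then remains to check that $S_{odd}, S_{even} \in \mathcal S$, i.e.\ that neither limit ends in a tail of all \zeros{} or all \ones. This is where the first lemma is the essential input: since every maximal ribbit of any $s_n$ has length at most five, the bits of $s_{2n+1}$ occupying any six consecutive positions $[6j+1, 6j+6]$ cannot all be equal, so $s_{2n+1}$ has a genuine bit-change strictly inside that block. Given any $j$, choose $n$ with $2n \geq 6(j+1)$; then $S_{odd}$ agrees with $s_{2n+1}$ on those positions and so also has a bit-change there. Since this holds for every $j \geq 0$, $S_{odd}$ has bit-changes at arbitrarily large positions and hence is not eventually constant; the identical reasoning applies to $S_{even}$. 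Therefore both limits lie in $\mathcal S$, and the convergence takes place in $\mathcal S$ as claimed.

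I expect essentially all of this to be routine bookkeeping with the metric; the one point that is not automatic — and the place I would concentrate the care — is the final paragraph, namely ruling out that the bitwise limits degenerate into an all-\zeros{} or all-\ones{} tail. The length-five bound on ribbits from the first lemma is exactly the tool that makes this step go through.
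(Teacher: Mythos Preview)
Your argument is correct and is essentially the intended one: the paper states this corollary immediately after Lemma~2 without proof, and your Cauchy argument via $|r_\ell|\ge \ell$ is exactly the deduction the reader is meant to make. Your final paragraph, using the length-five ribbit bound from Lemma~1 to rule out an eventually constant tail and so place the limits in $\mathcal S$, is in fact more careful than the paper, which asserts convergence in $\mathcal S$ without addressing this point; that step is necessary and your handling of it is clean.
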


%

Thus, we can take $S_{even} = \lim_{n \to \infty} k^{2n}(\texttt{10})$ and  $S_{odd} = \lim_{n \to \infty} k^{2n}(\texttt{1})$.
It turns out, not only are $S_{even}$ and $S_{odd}$ fixed points of $k^2$, they attract all other orbits under $k$ in $2^{\mathbb{N}}$.

\begin{thm}
  Let $S \in \mathcal{S}$ be a binary sequence.
  Then either
  \[
    \lim_{n \to \infty} d(k^n(S), k^n(\one)) = 0.
  \]
  or
    \[
    \lim_{n \to \infty} d(k^n(S), k^{n}(\texttt{10})) = 0.
  \]
\end{thm}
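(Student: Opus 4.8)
The plan is to deduce the statement from the uniqueness half of Theorem~\ref{me}. In brief: every orbit $\{k^n(S)\}$ eventually enters a ``regular'' set $\mathcal{G}$ on which $k$ is a continuous, length-nondecreasing self-map; there the subsequences $\{k^{2n}(S)\}$ and $\{k^{2n+1}(S)\}$ are Cauchy for $d$ and hence converge, to $S^{+}$ and $S^{-}$ say; continuity makes $k$ swap $S^{+}$ and $S^{-}$, so $(S^{+},S^{-})$ is a complement-description pair, and Theorem~\ref{me} forces $\{S^{+},S^{-}\}=\{S_{odd},S_{even}\}$; the two ways this can happen are the two alternatives of the theorem.

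\emph{Eventual regularity.} Since $k(S)$ always begins with \one, after one application of $k$ we may assume $S$ begins with an odd ribbit, as in Section~\ref{kafka}. The maximal ribbit length $M_n$ of $k^n(S)$ satisfies $M_{n+1}=O(\log M_n)$ (a long ribbit in $k^{n+1}(S)$ can only come from an exponentially longer ribbit, or pair of ribbits, in $k^n(S)$), so $M_n\le 6$ for all large $n$; thereafter the argument proving the first Lemma keeps $M_n\le 5$, and that proving its Corollary keeps the maximal even ribbit length $\le 3$. A short refinement of that case analysis, tracking which ribbits and then which adjacent ribbit pairs remain reachable once the length bounds hold, shows that for large $n$ the ribbit decomposition of $k^n(S)$ uses only the pairs of Table~\ref{rr}; the same bookkeeping shows the leading ribbit becomes and stays \one{} (if it had length $m\le 5$ it would become $1$ within two steps, since of $[1],\dots,[5]$ only $[3]$ starts with two \ones), so $k^n(S)$ eventually begins with \texttt{10}. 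Let $\mathcal{G}$ be the set of $S\in\mathcal{S}$ with all these features. It is forward invariant under $k$ and contains a tail of every orbit, and because ribbit lengths are bounded on $\mathcal{G}$ the restriction $k|_{\mathcal{G}}$ is continuous for $d$: if $S,S'\in\mathcal{G}$ agree on a long prefix then so do $k(S),k(S')$, via $k(S)=k(r_1)\ k(r_2\,r_3)\ k(r_4\,r_5)\cdots$.

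\emph{Convergence and identification.} For $S\in\mathcal{G}$ run the argument that showed $s_{\ell+1}$ and $s_{\ell+3}$ agree up to the $|r_\ell|$th bit, with $S$ in place of \one: the leading ribbit being \one, the first chunk $k(\one)=\texttt{10}$ is strictly longer than \one{} while, by Table~\ref{rr}, no later chunk $k(r_{2i}\,r_{2i+1})$ is shorter than $r_{2i}\,r_{2i+1}$, so the common prefix of $k^m(S)$ and $k^{m+2}(S)$ lengthens by at least one as $m$ grows. Hence $\{k^{2n}(S)\}$ and $\{k^{2n+1}(S)\}$ are Cauchy, so they converge in the complete space $2^{\mathbb{N}}$ to $S^{+}$ and $S^{-}$, which lie in $\mathcal{G}\subset\mathcal{S}$ (they begin with \texttt{10} and inherit bounded ribbits). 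By continuity of $k|_{\mathcal{G}}$, $k(S^{+})=\lim_n k^{2n+1}(S)=S^{-}$ and $k(S^{-})=S^{+}$, so $S^{+}$ and $S^{-}$ each describe the complement of the other; Theorem~\ref{me} then gives $\{S^{+},S^{-}\}=\{S_{odd},S_{even}\}$, with $S_{odd}=\lim_n k^{2n}(\one)$ and $S_{even}=\lim_n k^{2n}(\texttt{10})$. If $S^{+}=S_{odd}$ then $S^{-}=k(S_{odd})=\lim_n k^{2n+1}(\one)=S_{even}$, and since $k^{2n}(\one)\to S_{odd}$ and $k^{2n+1}(\one)\to S_{even}$ as well, $d(k^n(S),k^n(\one))\to 0$; if $S^{+}=S_{even}$, the same computation with \texttt{10} for \one{} gives $d(k^n(S),k^n(\texttt{10}))\to 0$. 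The two alternatives are distinct since $S_{odd}\ne S_{even}$, as Table~\ref{table} already shows.

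\emph{Main obstacle.} The only place real effort is needed is the regularity claim, and within it the step from ``ribbits are short'' to ``only non-shrinking pairs occur''. The length bounds alone do not suffice for the convergence argument: a chunk like $k(\texttt{000}\,\texttt{11111})$ would be \emph{shorter} than $\texttt{000}\,\texttt{11111}$, so one genuinely needs the sharper fact that no such pair appears in the tail of any orbit --- a finite but fiddly reachability analysis of ribbit pairs under $k$. Granting that, the rest is soft: completeness of $2^{\mathbb{N}}$, continuity of $k$ on the regular set, and the uniqueness already established in Theorem~\ref{me}.
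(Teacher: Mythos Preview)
Your argument has a structural circularity. In this paper the uniqueness half of Theorem~\ref{me} is not proved independently; it is the final Corollary (``the only fixed points of $k^2$ in $\mathcal{S}$ are $S_{even}$ and $S_{odd}$''), drawn \emph{from} the very theorem you are trying to prove. So when you invoke Theorem~\ref{me} to pin down your limits $S^{+},S^{-}$ as $\{S_{odd},S_{even}\}$, you are assuming precisely what this theorem is meant to supply. Without an independent uniqueness proof the deduction does not close.

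There is also a real gap in the regularity step. The recursion $M_{n+1}=O(\log M_n)$ only drives $M_n$ down if $M_0<\infty$, but $\mathcal{S}$ contains sequences with unbounded ribbit lengths, for instance $\texttt{1}\,\texttt{0}\,\texttt{11}\,\texttt{00}\,\texttt{1111}\,\texttt{0000}\cdots$. For such $S$ every iterate $k^n(S)$ still has arbitrarily long ribbits far out in the tail (an iterated logarithm of an unbounded sequence stays unbounded), so the orbit never enters your set $\mathcal{G}$ and your blanket appeal to Table~\ref{rr} for ``no chunk shrinks'' is unjustified. The paper's proof sidesteps both problems by arguing purely about prefixes: it shows the leading odd ribbit of $k^n(S)$ strictly shortens until some iterate begins with \texttt{10}, then repeats the idea on the next few ribbits until some iterate begins with one of the specific strings \texttt{1011110} or \texttt{101110}, which already occur as prefixes in the orbits of \texttt{1} and \texttt{10}. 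From that point the growing--common--prefix lemma applies directly to $k^n(S)$ against the matching $k^n(\texttt{1})$ or $k^n(\texttt{10})$, with no need for global ribbit bounds and no appeal to uniqueness.
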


\begin{proof}
  We claim that some iterate $k^n(S)$ begins with the substring \texttt{10}.
  Certainly, $k(S)$ begins with an odd ribbit, so we may assume $S$ also begins with a \one{} without loss of generality.
  Note that if $k(S)$ begins with an odd ribbit of length $\ell \geq 2$, then $k^2(S)$ begins with an odd ribbit of length strictly less than $\ell$.
  Otherwise, $k(S)$ begins with $\texttt{10}$, and so does $k^2(S)$.
  
  Assume without loss of generality that $S$ begins with \texttt{10}.
  Then $k(S)$ begins with \texttt{101}.
  Using the same argument above, we see that some iterate of $S$ begins with either \texttt{1 0 1 1 1 10} or \texttt{1 0 1 1 10}.
  At this point, the iterates $k^n(S)$ begin to metamorphose into either $S_{even}$ or $S_{odd}$, and any discrepancies are pushed out to the tail.
  \end{proof}

\begin{cor}
  Let $S$ be any binary sequence in $\mathcal{S}$.
  Then $\lim_{n \to \infty} k^{2n}(S)$ exists, and is equal to one of $S_{even}$ or $S_{odd}$.
\end{cor}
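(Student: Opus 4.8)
The plan is to deduce the corollary immediately from the preceding theorem together with the earlier corollary that $\{k^{2n}(\one)\}$ and $\{k^{2n}(\texttt{10})\}$ converge in $\mathcal{S}$, with limits $S_{odd}$ and $S_{even}$ respectively. First I would fix $S \in \mathcal{S}$ and apply the theorem, which places us in one of two cases: either $d(k^n(S), k^n(\one)) \to 0$ as $n \to \infty$, or $d(k^n(S), k^n(\texttt{10})) \to 0$. A real sequence tending to $0$ also tends to $0$ along its even-indexed subsequence, so in the first case $d(k^{2n}(S), k^{2n}(\one)) \to 0$, and in the second $d(k^{2n}(S), k^{2n}(\texttt{10})) \to 0$.

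Next I would apply the triangle inequality in $2^{\mathbb{N}}$, which is complete under $d$. In the first case,
\[
  d(k^{2n}(S), S_{odd}) \le d(k^{2n}(S), k^{2n}(\one)) + d(k^{2n}(\one), S_{odd}),
\]
where the first term on the right tends to $0$ by the theorem and the second by the convergence corollary; hence $k^{2n}(S) \to S_{odd}$. The second case is the same argument with $\one$ replaced by $\texttt{10}$ and $S_{odd}$ by $S_{even}$, yielding $k^{2n}(S) \to S_{even}$. In either case the limit exists and equals one of the two claimed sequences, which is the assertion.

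I expect no serious obstacle here: essentially all the work has already gone into the theorem and the convergence corollary, and what remains is bookkeeping. The only points deserving a sentence of care are that the theorem's dichotomy is phrased for the full orbit $k^n(S)$ whereas we only use its even sub-orbit (a harmless weakening), and that although $k$ maps $\mathcal{S}$ to itself, a limit of points of $\mathcal{S}$ need only lie in the completion $2^{\mathbb{N}}$ — but since $S_{even}, S_{odd} \in \mathcal{S}$ this is not an issue. If one also wanted the two alternatives to be mutually exclusive (not required for the statement), it would suffice to observe that $S_{even}$ and $S_{odd}$ first disagree at the sixth bit, so $d(S_{even}, S_{odd}) = 2^{-6} \neq 0$.
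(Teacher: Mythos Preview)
Your argument is correct and is exactly the intended deduction: the paper states this corollary without proof, treating it as an immediate consequence of the preceding theorem together with the definitions $S_{odd}=\lim k^{2n}(\one)$ and $S_{even}=\lim k^{2n}(\texttt{10})$, and your triangle-inequality bookkeeping is precisely what fills that gap. Your side remarks about passing to the even subsequence and about the limit lying in $\mathcal{S}$ are appropriate caveats and do not affect the argument.
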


\begin{cor}
    The only fixed points of $k^2$ in $\mathcal{S}$ are $S_{even}$ and $S_{odd}$.
\end{cor}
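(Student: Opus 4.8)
My plan is to derive this as a short formal consequence of the preceding convergence corollary, after checking that $S_{even}$ and $S_{odd}$ genuinely qualify as fixed points in $\mathcal{S}$. For the direction that these are the \emph{only} fixed points, suppose $S \in \mathcal{S}$ satisfies $k^2(S) = S$. An immediate induction gives $k^{2n}(S) = S$ for all $n \geq 0$, so the orbit sequence $\{k^{2n}(S)\}$ is constant and hence $d$-converges to $S$; by the previous corollary this limit is one of $S_{even}$ or $S_{odd}$, so $S \in \{S_{even}, S_{odd}\}$ and no other fixed point can exist.

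It then remains to verify that $S_{even}$ and $S_{odd}$ really are (distinct) fixed points of $k^2$ lying in $\mathcal{S}$. Membership in $\mathcal{S}$ follows from the maximal-ribbit lemma: the strings $k^{2n}(\texttt{10})$ and $k^{2n}(\one)$ all have maximal ribbits of length at most five and lengths tending to infinity, so in the $d$-limit the ribbit decomposition still consists of ribbits of bounded length, and in particular the tail is neither all \zeros{} nor all \ones. For the fixed-point property I would use that $k$ is continuous on $\mathcal{S}$: by the piecemeal identity $k(S) = k(r_1)\,k(r_2\,r_3)\,k(r_4\,r_5)\cdots$ together with the ribbit-length bound, any prescribed prefix of $k(S)$ is determined by a bounded prefix of $S$, so $k$ (and hence $k^2$) commutes with $d$-limits of sequences in $\mathcal{S}$; therefore $k^2(S_{odd}) = k^2\big(\lim_n k^{2n}(\one)\big) = \lim_n k^{2n+2}(\one) = S_{odd}$, and likewise $k^2(S_{even}) = S_{even}$. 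Distinctness is clear since $s_5$ and $s_6$ already disagree, so the two even-step limits differ.

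The only step that needs genuine care is the continuity of $k$, which matters because $k$ is defined string by string while $\mathcal{S}$ is not complete under $d$; but this is exactly what the bounded-ribbit lemma provides, since it confines the computation of each bit of $k(S)$ to a bounded window of $S$. Beyond that, the statement is pure bookkeeping on top of the convergence corollary, so I anticipate no real obstacle.
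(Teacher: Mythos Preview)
Your argument is correct and matches the paper's intent: the paper states this corollary without proof, relying on the immediately preceding corollary that $\lim_n k^{2n}(S)\in\{S_{even},S_{odd}\}$ for every $S\in\mathcal{S}$, from which the ``only'' direction follows exactly by your constant-orbit observation. Your additional work---verifying $S_{even},S_{odd}\in\mathcal{S}$ via the ribbit bound, and passing $k^2$ through the limit via the piecemeal formula---fills in details the paper only asserts; one minor caution is that $k$ is not continuous on all of $\mathcal{S}$ (arbitrarily long ribbits obstruct it), but you only need continuity at points with bounded ribbit length, which $S_{even}$ and $S_{odd}$ have, so the argument goes through.
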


\begin{table}
\begin{center}
\begin{tabular}{l|l}
$r \ r'$ & $k(r \ r')$ \\
\hline
\one & \texttt{10} \\
\texttt{01} & \texttt{1110} \\
\texttt{001} & \texttt{10110} \\
\texttt{0001} & \texttt{11110} \\
\texttt{011} & \texttt{11100} \\
\texttt{0011} & \texttt{101100} \\
\texttt{00011} & \texttt{111100} \\
\texttt{0111} & \texttt{11110} \\
\texttt{00111} & \texttt{101110} \\
\texttt{000111} & \texttt{111110} \\
\texttt{01111} & \texttt{111000} \\
\texttt{001111} & \texttt{1011000} \\
\texttt{0001111} & \texttt{1111000} \\
\texttt{011111} & \texttt{111010} \\
\texttt{0011111} & \texttt{1011010} \\
\texttt{00011111} & \texttt{1111010} \\
\end{tabular}
\end{center}
\caption{Elements of the Knave map. \label{rr}}
\end{table}

\section{Future Study}\label{future}

We have left open the question of the asymptotic growth of $|s_n|$.
Experimentally, we expect that
\[
  \lim_{n \to \infty} \frac{|s_{n+1}|}{|s_n|} = 1.12\ldots
\]
Adapting Johnston's argument to this problem would be an appropriate problem for an undergraduate student.

Further, we conjecture that that binary strings $S$ are in fact the sections of a larger dynamical system via the diagonal entries of certain Kermitian matrices.


\end{document}